\newtheorem{Prop}{Proposition}[section]
\newcommand{\ve}{\varepsilon}
\newcommand{\nni}{\noindent}
\def\S{\mathcal S}
\newcommand{\be}{\begin{equation}}
\newcommand{\ee}{\end{equation}}
\newcommand{\ba}{\begin{align}}
\newcommand{\ea}{\end{align}}
\newcommand{\abs}[1]{\lvert#1\rvert}
\newtheorem{example}{Example}[section]
\newtheorem{theorem}{Theorem}[section]
\newtheorem{corollary}[theorem]{Corollary}
\newtheorem{lemma}{Lemma}[section]
\newtheorem{alg}{Algorithm}[section]
\newtheorem{question}{Question}
\newenvironment{iolist}[1]%
{\begin{list}{}{%
\settowidth{\labelwidth}{\textsf{{\it #1.}}}%
\setlength{\labelsep}{4mm}%
\setlength{\leftmargin}{\labelwidth}%
\addtolength{\leftmargin}{\labelsep}%
}}%
{\end{list}}
\def\beq{\begin{equation}}\def\enq{\end{equation}}
\newenvironment{biglabellist}[1]%
{\begin{list}{}{%
\settowidth{\labelwidth}{\textsf{{\it #1.}}}%
\setlength{\labelsep}{2mm}%
\setlength{\leftmargin}{\labelwidth}%
\addtolength{\leftmargin}{\labelsep}%
\addtolength{\leftmargin}{4mm}%
\setlength{\itemsep}{6pt}%
\setlength{\listparindent}{0pt}%
\setlength{\topsep}{3pt}%
}}%
\title[group determinants]{The group determinants for $\mathbb Z_n \times H$}
\author[B. Paudel]{Bishnu Paudel}
\address{ Department of Mathematics\\
         Kansas State University\\
         Manhattan, KS 66506, USA}
\email{bpaudel@ksu.edu, pinner@math.ksu.edu}
\author[C. Pinner]{Christopher Pinner}
\keywords{group determinant, dihedral group, quaternion group}
\subjclass[2010]{Primary: 11C20, 15B36; Secondary: 11C08, 43A40}
\date{\today}
\begin{document}

\begin{abstract}
Let $\mathbb Z_n$ denote the cyclic group of order $n$.
We show how the group determinant for  $G= \mathbb Z_n \times H$ can be simply written in  terms of the group determinant for $H$.
We use this to get a complete description of the integer group determinants for $\mathbb Z_2 \times D_8$ where $D_8$ is the dihedral group of order 8, and $\mathbb Z_2 \times Q_8$ where $Q_8$ is the quaternion group of order 8.

\end{abstract}

\maketitle

\section{Introduction} 

At the meeting of the American Mathematical Society in Hayward, California, in April 1977, Olga Taussky-Todd \cite{TausskyTodd} asked whether one could characterize the values of the group determinant when the entries are all integers.
There was particular interest in the case of $\mathbb Z_n$, the cyclic group of order $n$, where the group determinant corresponds
to the $n\times n$ circulant determinant. For a prime $p,$ a complete description was obtained for  the cyclic groups $\mathbb Z_{p}$ and $\mathbb Z_{2p}$ in \cite{Newman1} and \cite{Laquer}, and for $D_{2p}$ and $D_{4p}$ in \cite{dihedral}. Here $D_{2n}$ denotes the dihedral group of order $2n$. In general though this quickly becomes a hard problem,
with only partial results known even for $\mathbb Z_{p^2}$ once $p\geq 7$ (see \cite{Newman2} and \cite{Mike}).
A complete description has  though been  obtained for all groups of order less than 16  (see \cite{smallgps} and \cite{bishnu1}),
and for 6 of the 14 groups  of order 16,  $D_{16}$  and  the five  abelian groups $\mathbb Z_{16}$, $\mathbb Z_2 \times \mathbb Z_8$, $\mathbb Z_2^4$, $\mathbb Z_4^2$ and $\mathbb Z_2^2 \times \mathbb Z_4$ (see  \cite{dihedral,Yamaguchi1,Yamaguchi2,Yamaguchi3,Yamaguchi4} and \cite{Yamaguchi5}). We write $\mathcal{S}(G)$ for the set of integer group determinants for the group $G$. 

Our goal here is to show how the group determinant for a group of the form $G=\mathbb Z_n \times H$ can be straightforwardly related to the group determinants for the group $H$. We use this to give a complete description for two more non-abelian groups of 
order 16, namely $\mathbb Z_2\times D_8$ and  $\mathbb Z_2\times Q_8$ where $Q_8$ is the quaternion group.

Here we shall think of the  group determinants as being defined on elements of the group ring $\mathbb C [G]$
$$ \mathcal{D}_G\left( \sum_{g\in G} a_g g \right)=\det\left( a_{gh^{-1}}\right) ,$$
although our ultimate interest is of course  in the integer group determinants $\mathbb Z [G]$. We observe  the multiplicative property
\be \label{mult} \mathcal{ D}_G(xy)= \mathcal{D}_G(x)\mathcal{D}_G(y), \ee
using that
$$ x=\sum_{g \in G} a_g g,\;\;\;  y=\sum_{g \in G} b_g g \; \Rightarrow  \; xy=\sum_{g\in G} \left(\sum_{hk=g}a_hb_k\right) g. $$

\section{Products with $\mathbb Z_n$}

We show that when $G=\mathbb Z_n \times H$ we can write our integer group $G$-determinant as  a product
of $n$ group $H$-determinants of elements in $\mathbb Z[\omega_n][H],$ where $\omega_n:=e^{2\pi i/n}.$

\begin{theorem} If $G=\mathbb Z_n \times H$ then for any $a_{ih}$ in $\mathbb C$
\be \label{product}  \mathcal{D}_G\left( \sum_{i=0}^{n-1}\sum_{h\in H} a_{ih}(i,h)\right)=\prod_{y^n=1} \mathcal{D}_H\left( \sum_{h\in H} \left(\sum_{i=0}^{n-1} a_{ih} y^i\right) h\right). \ee
\end{theorem}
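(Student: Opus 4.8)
The plan is to exploit the multiplicative property \eqref{mult} of the group determinant together with the fact that $\mathbb{Z}[G] = \mathbb{Z}[\mathbb{Z}_n][H]$. First I would observe that an element $x = \sum_{i,h} a_{ih}(i,h)$ of $\mathbb{C}[G]$ can be regarded as an element of $R[H]$ where $R = \mathbb{C}[\mathbb{Z}_n] = \mathbb{C}[t]/(t^n-1)$, namely $x = \sum_{h\in H}\bigl(\sum_i a_{ih} t^i\bigr)h$. The matrix $\bigl(a_{gk^{-1}}\bigr)_{g,k\in G}$ whose determinant defines $\mathcal{D}_G(x)$ is then, after grouping the rows and columns by their $H$-component, a block matrix whose blocks are indexed by $H\times H$ and whose $(h,h')$ block is the $n\times n$ circulant of the coefficient vector of $\sum_i a_{i,h(h')^{-1}}t^i$. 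In other words, $\mathcal{D}_G(x)$ is the determinant of an $|H|\times|H|$ matrix over the ring of $n\times n$ circulants.

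Next I would diagonalize the circulant algebra: the map sending $t$ to the diagonal matrix $\mathrm{diag}(1,\omega_n,\dots,\omega_n^{n-1})$ simultaneously diagonalizes every circulant block, via conjugation by (the appropriate Kronecker product with) the $n\times n$ Fourier matrix. Conjugating the whole $n|H|\times n|H|$ matrix by this fixed invertible matrix does not change its determinant, and it transforms the block circulant matrix into a matrix which, after a permutation of rows and columns that groups together the $y=\omega_n^j$ eigenspaces, becomes block diagonal with $n$ blocks, the $j$-th block being exactly $\bigl(\sum_i a_{i,g(k)^{-1}}\,\omega_n^{ij}\bigr)_{g,k\in H}$, i.e. the matrix defining $\mathcal{D}_H\bigl(\sum_h(\sum_i a_{ih}\omega_n^{ij})h\bigr)$. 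Taking determinants and letting $y$ run over all $n$-th roots of unity yields \eqref{product}.

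The one genuinely delicate point is bookkeeping the two permutations and the conjugating matrix carefully enough to be sure the determinant is preserved and that the resulting blocks land in exactly the claimed form — in particular checking that the entry in position $(g,k)$ of the $j$-th block really is indexed by $g(k)^{-1}$ in $H$ and not by $(k)^{-1}g$ or some other variant, which matters for matching the definition of $\mathcal{D}_H$. An alternative, cleaner route that sidesteps explicit matrices is representation-theoretic: $\mathcal{D}_G(x) = \prod_{\rho}\det\rho(x)^{\deg\rho}$ over the irreducible representations $\rho$ of $G$, and the irreducibles of $\mathbb{Z}_n\times H$ are precisely $\chi_j\otimes\sigma$ with $\chi_j$ the character $1\mapsto\omega_n^j$ of $\mathbb{Z}_n$ and $\sigma$ irreducible for $H$; since $(\chi_j\otimes\sigma)(x) = \sigma\bigl(\sum_h(\sum_i a_{ih}\omega_n^{ij})h\bigr)$, grouping the product over $\sigma$ for each fixed $j$ reconstitutes $\mathcal{D}_H$ of the specialized element. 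Either way the proof is short; I would likely present the matrix version since it keeps everything over $\mathbb{C}$ without invoking the full machinery of group representations, but flag the representation-theoretic viewpoint as the conceptual explanation.
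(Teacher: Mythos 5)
Your proposal is correct and matches the paper's proof in both of its routes: the paper's primary argument is exactly your block-circulant diagonalization (conjugating the group matrix by the Fourier eigenvector matrix tensored with the identity, following Newman), and the paper also records your representation-theoretic alternative via the extensions $\rho_y(i,h)=y^i\rho(h)$. The only cosmetic difference is that the paper organizes the group matrix as an $n\times n$ array of $|H|\times|H|$ blocks rather than an $|H|\times|H|$ array of circulants, which are equivalent up to the permutation you already flag.
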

Results of this flavour have been obtained before \cite{Yamaguchi0}, but here we do not need to assume that $H$ is abelian.

\begin{proof}
One way to see this is to use Frobenius' factorisation \cite{Frob} of the group determinant in terms of the irreducible, non-isomorphic, representations  $\rho$ of $G$ (see for example \cite{Conrad} or \cite{book})
$$ \mathcal{D}_G\left(\sum_{g\in G} a_g g\right) =\prod_{\rho} \det\left(\sum_{g\in G} a_g\rho(g) \right)^{\deg(\rho)}.$$
Observe that every representation $\rho$ for $H$ extends to $n$ representations for G
$$ \rho_y ( i,h) = y^i \rho(h), $$
where $y$ runs through the $n$th roots of unity.

More directly we can alternatively follow Newman's proof \cite{Newman1} of the factorization of the group determinant for $G=\mathbb Z_n$.
Newman observes that the group matrix $M$ for $\sum_{i\in \mathbb Z_n} A_i i,$ that is the circulant matrix with first row
$A_0, A_1,\ldots , A_{n-1},$ takes the form 
$$ M=A_0I_n+A_1P+\cdots +A_{n-1}P^{n-1}, \;\;\; P=\begin{pmatrix} 0 & 1 & 0 & \cdots & 0 \\ 0 & 0 & 1 & \cdots &  0 \\
  &  &  & \cdots   & \\
0 & 0 & 0 & \cdots & 1 \\
1 &  0 & 0 & \cdots & 0 \end{pmatrix}.$$ Now  $P$ has eigenvalues $y$, $y^n=1$, so the matrix $M$  will
have the same eigenvectors  as $P$ but with eigenvalues 
\be \label{ev} A_0+A_1y+\cdots A_{n-1}y^{n-1},\;\;y=1,\omega_n,\omega_n^2,\ldots ,\omega_{n}^{n-1}.\ee
Hence the matrix of eigenvectors $B$
will yield a diagonal matrix $B^{-1}MB$ with the values 
\eqref{ev} down the diagonal.

 Now suppose that $H=\{h_1,\ldots ,h_m\}$ and order the elements so that the first row of the $G=\mathbb Z_n\times H$
group matrix ${\mathcal  M}$ for $\sum_{i\in \mathbb Z_n, h\in H} a_{ih}(i,h)$ consists of the 
$$a_{0h_1},\ldots ,a_{0h_m},a_{1h_1},\ldots ,a_{1h_m},\ldots, a_{(n-1)h_1},\ldots ,a_{(n-1)h_m}. $$
Then it is not hard to see that first $m$ rows of our $G$ group matrix ${\mathcal M}$ will consists of  $m\times m$ blocks $A_0,A_1,\ldots, A_{n-1},$
where $A_i$ is the group $H$ matrix associated to $\sum_{h\in H} a_{ih}h,$ and the subsequent rows the same blocks cyclically permuted.

Hence if we take the $n\times n$ matrix $B$ and replace each entry $a_{ij}$ with the $m\times m$ block $a_{ij}I_m$
we obtain an $nm \times nm$ matrix ${\mathcal B},$ where ${\mathcal B}^{-1} {\mathcal M {\mathcal B}}$ will now be a block matrix with entries the same linear
combinations of the blocks $A_i$ as occured for the elements in $B^{-1}MB$; that is blocks \eqref{ev} down the diagonal
and zeros elsewhere. The result is then plain.
\end{proof}

Notice that if we start with an integer $G$ group determinant, then we can assemble the $n$ determinants in
\eqref{product} into   $\tau (n)$ integers by combining the primitive $d$th roots of unity, $d\mid n$. If $H$ is abelian, then  these will be  integer $H$ group determinants
\be \label{combine} \prod_{\stackrel{y=\omega_d^j}{\gcd(j,d)=1}} \mathcal{D}_H\left( \sum_{h\in H} \left(\sum_{i=0}^{n-1} a_{ih} y^i\right) h\right)= \mathcal{D}_H\left( \prod_{\stackrel{y=\omega_d^j}{\gcd(j,d)=1}}\sum_{h\in H} \left(\sum_{i=0}^{n-1} a_{ih} y^i\right) h\right), \ee
since the resulting coefficients will be symmetric expressions in the conjugates and hence in $\mathbb Z$.
In particular, an integer $G=\mathbb Z_n \times H$ group determinant is an integer group $H$ determinant, though this can 
be seen more directly  (if $H=\mathbb Z_{n_1}\times \cdots \times \mathbb Z_{n_k},$ then the $G$ group determinant reduces to a product of an integer polynomial $F(y,x_1,\ldots ,x_k)$ over the $n,n_1,\ldots ,n_k$th roots of unity and $\prod_{y^n=1}F(y,x_1,\ldots ,x_k)$ is just an integer polynomial in one less variable; see also \cite[Theorem 1.4]{Yamaguchi3}).
If $H$ is nonabelian,  then the process \eqref{combine} may leave elements in
$\mathbb Z[\omega_d] [H]$, and we are unable to say that an integer $G$ group determinant must be  an integer $H$ group determinant,  except for the case when $n=2$.




\section{ The group $ \mathbb Z_2 \times D_8$}

Notice that when $n=2$ we can write an integer $\mathbb Z_2 \times H$ group determinant as a  product of two
integer $H$ group determinants:
$$ \mathcal{D}_{\mathbb Z_2 \times H}\left( \sum_{h\in H} a_h(0,h) + \sum_{h\in H} b_h (1,h)\right) = \mathcal{D}_H \left( \sum_{h\in H} (a_h+b_h)h\right) \mathcal{D}_H\left( \sum_{h\in H} (a_h-b_b) h \right). $$

In the case of $H=D_8=\langle F,R\: | \; F^2=1,R^4=1, RF=FR^3\rangle$  we take the coefficients of the group elements
 $(0,R^j)$, $(1,R^j)$, $(0,FR^j)$ and $(1,FR^j)$, as the coefficients of $x^j$ 
in four cubics, $f_1,f_2,g_1$ and $g_2$ respectively. The $\mathbb Z_2\times D_8$ determinant, which we will
denote
$\mathcal{D}(f_1,f_2,g_1,g_2)$, is then the product of two $D_8$ determinants, which from \cite{dihedral} can be written
\be \label{defD}\mathcal{D}(f_1,f_2,g_1,g_2)=\mathcal{D}(1) \mathcal{D}(-1),\;\;\;\mathcal{D}(z)=m_1(z)m_2(z)\ell(z)^2, \ee
where
\begin{align*}  m_1(z) & =(f_1(1)+zf_2(1))^2-(g_1(1)+zg_2(1))^2, \\
m_2(z) & =(f_1(-1)+zf_2(-1))^2-(g_1(-1)+zg_2(-1))^2,
\end{align*}
and
\be \label{defell}\ell (z)= |f_1(i)+zf_2(i)|^2-|g_1(i)+zg_2(i)|^2. \ee

We obtain a complete description of the $\mathbb Z_2 \times D_8$ integer group determinants.

\begin{theorem}\label{MainD8}
For $G=\mathbb Z_2 \times D_8$ the set of  odd integer group determinants is
$$ A:=\{ m(m+16k) \; : \; m,k\in \mathbb Z,\; m \text{ odd} \}. $$
The even determinants are the $2^{16}m$, $m\in \mathbb Z.$

\end{theorem}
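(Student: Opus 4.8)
The plan is to exploit the factorisation \eqref{defD}: writing $F_1=f_1(1), F_2=f_2(1), G_1=g_1(1), G_2=g_2(1)$ and similarly with $-1$, and setting $a=|f_1(i)|^2$-type real quantities, the $\mathbb Z_2\times D_8$ determinant splits as $\mathcal{D}(1)\mathcal{D}(-1)$ with $\mathcal{D}(z)=m_1(z)m_2(z)\ell(z)^2$. The first task is to understand, over $z=\pm1$, what $m_1$, $m_2$, $\ell$ can be. Each $m_i(z)$ is a difference of two squares $u^2-v^2=(u-v)(u+v)$, and $\ell(z)=|\alpha|^2-|\beta|^2$ where $\alpha=f_1(i)+zf_2(i)$, $\beta=g_1(i)+zg_2(i)$. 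First I would record the parity constraints: the coefficients of the cubics are arbitrary integers, so $f_j(1),f_j(-1)$ run over all integers while $f_j(i)=(c_0-c_2)+(c_1-c_3)i$ gives $|f_j(i)|^2$ an arbitrary sum of two squares; crucially $f_j(1)\equiv f_j(-1)\pmod 2$ and $|f_j(i)|^2\equiv f_j(1)\pmod 2$, so all of $m_1,m_2,\ell$ share a common parity controlled by $N:=(F_1-G_1)\cdots$-type quantities.

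For the \textbf{odd case}: the determinant is odd iff every factor at $z=1$ and $z=-1$ is odd, which forces each of $f_1\pm g_1$, $f_2\pm g_2$ to be odd at $1$, $-1$, $i$ in the appropriate sense. I would show the odd $D_8$-determinants $\mathcal{D}(z)$ for a \emph{fixed} $z$ are exactly $\{n^2 t: n \text{ odd}\}$ is too crude; rather, from \cite{dihedral} the odd $\mathcal{D}(z)$ form the set $\{ab\,c^2 : a,b,c \text{ odd}\}$, and mod $16$ an odd square is $\equiv 1$ or $9$, an odd product $ab$ is any odd residue, and $\ell(z)^2\equiv 1\pmod 8$. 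So $\mathcal{D}(z)\equiv m_1(z)m_2(z)\pmod 8$ roughly, and one checks $\mathcal{D}(1)\mathcal{D}(-1)$ can be written $m\cdot(m+16k)$: the factor $16$ arises because $\mathcal{D}(1)$ and $\mathcal{D}(-1)$ are congruent modulo $16$ (since $f_j(1)\equiv f_j(-1)$, $g_j(1)\equiv g_j(-1)\pmod 2$ forces the two $D_8$-determinants to agree mod $16$ after expanding the squares — this is the key congruence to verify carefully). Conversely, given $m$ odd and $k\in\mathbb Z$, I would construct explicit cubics realising $\mathcal{D}(1)=m$, $\mathcal{D}(-1)=m+16k$: take $g_2=0$ so $\mathcal{D}(1)$ and $\mathcal{D}(-1)$ differ only through $f_2$, choose $f_2$ to be a single monomial contributing a controlled amount, and solve for $f_1,g_1$ to hit $m$ using that the odd $D_8$-determinants already cover all of $\{ab c^2\}$.

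For the \textbf{even case}: if any factor is even then, by the shared-parity observation, \emph{all six} factors $m_1(1),m_2(1),\ell(1),m_1(-1),m_2(-1),\ell(-1)$ are even. This already gives $2^2\cdot 2^2\cdot(2^2)^2 = 2^8$ from the two $\ell$'s squared plus... — more carefully, each $m_i(z)=(u-v)(u+v)$ with $u,v$ of equal parity, so $m_i(z)$ is divisible by $4$; likewise $\ell(z)$ even; so $\mathcal{D}(z)=m_1m_2\ell^2$ is divisible by $4\cdot 4\cdot 4=2^6$, hence $\mathcal{D}(1)\mathcal{D}(-1)$ by $2^{12}$. To reach $2^{16}$ one needs an extra factor $2^4$, which should come from a finer analysis: when everything is even one can factor $2$ out of each cubic's ``even part'' and a $2$-adic valuation count on $(u-v)(u+v)$ (one of $u\pm v$ is $\equiv 2\pmod 4$, the other $\equiv 0$, when $u,v$ are both even only if... ) — I expect the correct statement is $v_2(\mathcal{D}(z))\ge 8$ for each $z$ with equality achievable, giving $2^{16}\mid \mathcal D(1)\mathcal D(-1)$. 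The converse, that every $2^{16}m$ occurs, I would get by scaling: realise $m$ as an ordinary integer via a rank-one type construction (one nonzero coefficient) and multiply through, or more honestly exhibit cubics where $\mathcal{D}(1)=2^{16}m$ and $\mathcal{D}(-1)=1$... which is impossible by the shared parity — so instead $\mathcal{D}(1)=2^8 m$, $\mathcal{D}(-1)=2^8$ as realisable ``minimal even'' $D_8$-determinants, then adjust.

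The \textbf{main obstacle} I anticipate is pinning down the exact power of $2$ in the even case — proving $2^{16}$ (not $2^{12}$ or $2^{8}$) is both necessary and sufficient requires a careful $2$-adic valuation argument on the three factor-types simultaneously across $z=\pm 1$, handling the interaction between the ``$f_1(1)\equiv f_1(-1)$'' congruences and the difference-of-squares factorisations; and on the existence side, simultaneously controlling $\mathcal{D}(1)$ and $\mathcal{D}(-1)$ (which are not independent — they are linked through the common coefficients) to hit a prescribed pair of values, especially matching the ``$+16k$'' offset in the odd case, will need an explicit and slightly delicate choice of the four cubics.
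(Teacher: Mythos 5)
Your skeleton matches the paper's: factor the $\mathbb Z_2\times D_8$ determinant as a product of two $D_8$ determinants $\mathcal{D}(1)\mathcal{D}(-1)$, prove $\mathcal{D}(1)\equiv\mathcal{D}(-1)\pmod{16}$ in the odd case, use $2^8$-divisibility of even $D_8$ determinants in the even case, and construct explicit cubics for achievability. But every substantive step is left unproven, and two of your stated justifications would not survive being written out. First, the central congruence: you claim it follows ``since $f_j(1)\equiv f_j(-1)$, $g_j(1)\equiv g_j(-1)\pmod 2$''. Mod-$2$ agreement of the arguments gives nothing like mod-$16$ agreement of the determinants. The actual reason is that the two $D_8$ determinants are evaluated at group-ring elements differing by $2h$, $2k$; after normalising (say $f(1)\equiv 1\bmod 4$, $2\mid g(1)$) one must expand $m_3-m_1$, $m_4-m_2$ and $\ell_2^2-\ell_1^2$ mod $16$ in the coefficients of $(x-1)^i$ and check that the three contributions cancel --- a genuine computation with no shortcut, and the one place the theorem could fail. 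Second, your $2$-adic count in the even case only yields $2^6\mid\mathcal{D}(z)$, hence $2^{12}$, not $2^{16}$; the needed input is that every even $D_8$ determinant is divisible by $2^8$ (a result of \cite{dihedral}), which you ``expect'' rather than establish.

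The achievability half is also a real gap, not a formality. You cannot choose $\mathcal{D}(1)=m$ and $\mathcal{D}(-1)=m+16k$ independently: they share the same four cubics, and the odd $D_8$ determinants are themselves a constrained set (indeed $A$ is \emph{not} all odd products --- it is all $1\bmod 16$ together with only those $9\bmod 16$ containing a prime $\pm3,\pm5\bmod{16}$). The paper needs separate explicit quadruples $(f_1,f_2,g_1,g_2)$ realising $(5+16t)(5+16k)$, $(3+16t)(3+16k)$, and four families covering $2^{16}m$ for every $2$-adic valuation and residue of $m$; your proposed route for the even values ($\mathcal{D}(1)=2^8m$, $\mathcal{D}(-1)=2^8$) is exactly the kind of independent prescription that the shared coefficients forbid, as you yourself noticed mid-paragraph. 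So the proposal is a correct plan of the same shape as the paper's proof, but the proof content --- the mod-$16$ cancellation, the $2^8$ bound, and the explicit constructions --- is missing.
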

Notice  the set of achieved odd values $A$ consists of  all the integers 1 mod 16 and exactly those integers $9$ mod 16 which contain a prime $p\equiv \pm 3$ or $\pm 5$ mod 16. These are the same as the odd values found in  \cite{Yamaguchi3} for $\mathbb Z_2 \times \mathbb Z_8$. In fact $\mathcal{S}(\mathbb Z_2 \times D_8)\subsetneq \mathcal{S}(\mathbb Z_2 \times \mathbb Z_8).$
Sets of this type occur for other 2-groups.

\begin{Prop}\label{Prop1}
The odd integer group determinants for  $G=\mathbb Z_2 \times \mathbb Z_{2^n}$  are the
$$ \{ m(m+|G|k) \; : \; m,k\in \mathbb Z,\; m \text{ odd }\}. $$
The odd  integer group determinants for  $G=\mathbb Z_2^n$ or $\mathbb Z_2^n\times \mathbb Z_4$ are the $m\equiv 1$ mod $|G|$.
\end{Prop}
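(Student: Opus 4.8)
The plan is to handle the three families separately, using the factorization into $\tau(n)$ integer $H$-determinants coming from \eqref{combine}, which applies since $H$ is abelian in every case here. I will treat $\mathbb Z_2\times\mathbb Z_{2^n}$ first. Writing an element as $f(x)+g(x)$ where $f$ collects the coefficients on $(0,R^j)$ and $g$ those on $(1,R^j)$ (here $R$ generates $\mathbb Z_{2^n}$), the $\mathbb Z_2$-split gives $\mathcal D = \mathcal D_{\mathbb Z_{2^n}}(f+g)\,\mathcal D_{\mathbb Z_{2^n}}(f-g)$. Both $f\pm g$ have the same value $m$ say reduced mod $2$ — more precisely, $f+g$ and $f-g$ are congruent mod $2$, so their $\mathbb Z_{2^n}$-determinants are congruent mod $2^{n}$ (and indeed one can push this to mod $|G|=2^{n+1}$ after the further circulant factorization), which forces the product to have the shape $m(m+|G|k)$. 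The reverse inclusion is the constructive half: given $m$ odd and any $k$, I would exhibit explicit $f,g$ realizing $m(m+|G|k)$, imitating the constructions already known for $\mathbb Z_{2^n}$ or for $\mathbb Z_2\times\mathbb Z_8$ in \cite{Yamaguchi3}; the point is that the two factors can be made to be $m$ and $m+|G|k$ independently.

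For $\mathbb Z_2^n$, every irreducible representation is a character with values $\pm1$, so the group determinant is $\prod_{\chi}(\sum_g a_g\chi(g))$, a product of $2^n$ integer linear forms in the $a_g$ each congruent to $\sum_g a_g\bmod 2$. When the total $\sum a_g$ is odd, all $2^n$ factors are odd and pairwise congruent mod $2$; a standard averaging/telescoping argument over the $2^n$ characters (each factor differs from the "all $+1$" factor by twice a sum of coefficients) shows the product is $\equiv 1\bmod 2^n=|G|$. Conversely every residue $1\bmod |G|$ is hit: take $a_e=m$ on the identity and distribute a single correction, or use the known description of $\mathcal S(\mathbb Z_2^n)$ from \cite{smallgps,bishnu1} for small $n$ together with an induction $\mathbb Z_2^{n}=\mathbb Z_2\times\mathbb Z_2^{n-1}$ via the Theorem. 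The case $\mathbb Z_2^n\times\mathbb Z_4$ is the same in spirit: the $\mathbb Z_4$-factor contributes, via \eqref{combine}, one rational-integer factor from $y=\pm1$ and one from the primitive fourth roots, namely $|f(i)+\cdots|^2$-type norms which are sums of two squares; combined with the $2^n$ real characters one again gets each factor $\equiv$ the common parity, and for odd total the product collapses to $1\bmod |G|$. The realization of every $1\bmod|G|$ proceeds by the same explicit-element recipe.

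The main obstacle I anticipate is the sharp $2$-adic bookkeeping: showing the product of the $2^n$ (or $2^{n+1}$, resp. $2^n\cdot 2$) factors is not merely $\equiv 1 \bmod 2^{\text{something smaller}}$ but exactly mod $|G|$, and in the $\mathbb Z_2\times\mathbb Z_{2^n}$ case that the "defect" is exactly $|G|k$ with $k$ free. This is where one must be careful that after all the circulant factorizations the individual algebraic-integer factors, which a priori only live in $\mathbb Z[\omega_{2^n}]$, recombine to rational integers with the claimed congruences — here the symmetry argument following \eqref{combine} does the job, but one needs that the relevant cyclotomic-integer factors are all $\equiv 1$ modulo the prime above $2$ to the correct power, which is a short but delicate lemma about $\prod_{\gcd(j,2^r)=1}(1-\omega_{2^r}^{j}u)$ type products. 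Once that congruence lemma is in place, the upper bound (necessity) is immediate from the factorization, and the lower bound (sufficiency) is a finite list of explicit constructions, one per family, which I would verify directly using \eqref{product}.
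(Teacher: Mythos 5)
Your overall architecture matches the paper's: split off the $\mathbb Z_2$ factor to write $\mathcal{D}=\mathcal{D}_1\mathcal{D}_2$ as a product of two $H$-determinants, prove $\mathcal{D}_1\equiv\mathcal{D}_2\pmod{|G|}$ for necessity, and give explicit realizations for sufficiency. But both load-bearing steps are left as promissory notes, and the first is precisely the hard part. You assert that $f+g\equiv f-g\pmod 2$ forces $\mathcal{D}_{\mathbb Z_{2^n}}(f+g)\equiv\mathcal{D}_{\mathbb Z_{2^n}}(f-g)\pmod{2^{n+1}}$, and then concede in your last paragraph that this sharp $2$-adic bookkeeping is the main obstacle, offering only a possible route through congruences for products of the form $\prod_{\gcd(j,2^r)=1}(1-\omega_{2^r}^ju)$. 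The paper does not reprove this statement either: it imports it wholesale as \cite[Theorem 2.3]{dilum2}, which gives exactly the inclusion \eqref{strong} for any abelian $2$-group $H$, and for $\mathbb Z_2^n$ it cites \cite[Lemma 2.1]{dilum1} for the fact that odd determinants are $1$ mod $|G|$. Without either the citation or a worked-out congruence lemma, your necessity direction is unproved. Your telescoping sketch for $\mathbb Z_2^n$ (each character factor equals $L_1-2S_\chi$) does not by itself yield $1$ mod $2^n$ --- a product of $2^n$ odd integers need not be $1$ mod $2^n$, so the specific structure of the linear forms must be exploited --- and your fallback induction via the Theorem only yields the factorization into two factors, not the needed congruence between them.

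The sufficiency direction is likewise only gestured at. The paper's realization for $\mathbb Z_2\times\mathbb Z_{2^n}$ is a concrete one-line element, $F(x,y)=\prod_{p^{\alpha}\parallel m}\left((x^p-1)/(x-1)\right)^{\alpha}+k(y+1)(x^t-1)/(x-1)$, whose two factors are exactly $m+k|G|$ and $m$; for $\mathbb Z_2^n$ and $\mathbb Z_2^n\times\mathbb Z_4$ it invokes the general observation that $1+k|G|$ is attained in any group by taking $a_g=1+k$ at the identity and $a_g=k$ elsewhere. For $\mathbb Z_2^n\times\mathbb Z_4$ the paper's necessity argument is a short induction you do not reproduce: granting \eqref{strong}, if $m\equiv 1$ mod $|G_{n-1}|$ then $m(m+|G_n|k)\equiv m^2\equiv 1$ mod $|G_n|$; your treatment of that case (``the product collapses to $1$ mod $|G|$'') is not an argument. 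In short: right skeleton, but as written the proposal establishes neither inclusion.
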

Note, for any group the $m\equiv 1$ mod $|G|$ are in $\mathcal{S}(G)$, with $1+k|G|$ obtained  by taking $a_g=1+k$ for the identity and $a_g=k$ for the others.

\begin{proof}[Proof of Theorem \ref{MainD8}]

{\bf Achieving the values.} We write $H(x):=(x+1)(x^2+1)$. 

We achieve the values in $A$ with $m=\pm 1,\pm 9$ mod 16 from 
$$\mathcal{D}(1+kH,kH,kH,kH)=1+16k. $$ 
We get those with $\pm m\equiv 5$ mod 16 from $(5+16t)(5+16k)$ achieved with
$$\mathcal{D}(1+x+x^2+(t+k)H,x-x^3+(t-k)H,1+x+(t+k)H,1-x^2+(t-k)H)$$
and those with $\pm m\equiv 3$ mod 16 from $(3+16t)(3+16k)$  achieved with
$$\mathcal{D}(1+x+(t+k)H,1+x-x^2-x^3+(t-k)H,1+x-x^3+(t+k)H,x-x^3+(t-k)H).$$
We achieve the even values $2^{18}m$ using
$$\mathcal{D}(1+x+x^2-mH,1-x^2-x^3-mH,1+x-x^3+mH,x+mH),$$
the $2^{17}(2m+1)$  with
$$\mathcal{ D}(1+x+x^2+x^3+mH,1+x+mH,x+mH,1+x^2-x^3+mH),$$
the $2^{16}(1+4m)$ from
$$ \mathcal{D}(1+x+x^2+x^3+mH,1+x-x^2-x^3+mH,1+x-x^2-x^3+mH,1-x+mH),$$
and the $2^{16}(-1+4m)$ from
$$ \mathcal{D}(1+x+x^2-mH,1+x-x^3-mH,1-x^3-mH,x-x^2-mH). $$

\noindent
{\bf The odd values.} We show that any odd determinant must lie in $A$. We know that any $\mathbb Z_2 \times D_8$ determinant must be the product of two $D_8$
determinants, which we can write
\be \label{GenForm1} \mathcal{D}_1=m_1m_2\ell_1^2, \;\;\;  \mathcal{D}_2=m_3m_4\ell_2^2 \ee
with 
$$ m_1=f(1)^2-g(1)^2,\; \;\; m_2=f(-1)^2-g(-1)^2,\; \;\;\ell_1=|f(i)|^2-|g(i)|^2, $$
and
\begin{align*}  m_3 & =(f(1)+2h(1))^2-(g(1)+2k(1))^2,  \\
m_4 & =(f(-1)+2h(-1))^2-(g(-1)+2k(-1))^2, \\
\ell_2 & = |f(i)+2h(i)|^2-|g(i)+2k(i)|^2.
\end{align*}
Assume that  $\mathcal{D}_1\mathcal{D}_2$ is odd. Switching $f$ and $g$ and replacing $f$ by $-f$ as necessary, we shall  assume that $f(1)\equiv 1$ mod 4 and $2\mid g(1)$.
The result will follow once we show that 
$$ \mathcal{D}_1\equiv \mathcal{D}_2 \pmod{ 16}. $$
We write
$$ h(x)=\sum_{i=0}^3 a_i(x-1)^i,\;\; k(x)=\sum_{i=0}^3 b_i(x-1)^i, \;\;  f(x)=\sum_{i=0}^3 c_i(x-1)^i,\;\; g(x)=\sum_{i=0}^3 d_i(x-1)^i,$$
where $c_0=1$ mod 4 and $2\mid d_0$.
Now 
\begin{align*} m_3-m_1 & =4f(1)h(1)+4h(1)^2- 4k(1)g(1)-4k(1)^2\equiv 4a_0+4a_0^2-4b_0d_0-4b_0^2 \mod 16,\\
m_4-m_2& =4f(-1)h(-1)+4h(-1)^2- 4k(-1)g(-1)-4k(-1)^2 \\ 
& \equiv 4(a_0-2a_1-2a_0c_1)+4a_0^2 -4(b_0d_0-2 b_0d_1) -4b_0^2   \mod 16 ,
\end{align*}
and
\begin{align*} \ell_2-\ell_1 & = (2h(i)\overline{f(i)}+2\overline{h(i)}f(i)+4|h(i)|^2)-  (2k(i)\overline{g(i)}+2\overline{k(i)}g(i)+4|k(i)|^2)\\
\equiv &  (4a_0-4a_0c_1-4a_1+4a_0^2)-(   -4b_0d_1    + 4b_0^2)     \mod 8 \end{align*}
and
$$ \ell_2^2 \equiv \ell_1^2 +   8 (a_0-a_0c_1-a_1+a_0^2+b_0d_1    -b_0^2) \mod 16. $$
Since $m_1,m_2,\ell_1^2\equiv 1$ mod 4 we get 
\begin{align*}  \mathcal{D}_1-\mathcal{D}_2  \equiv   &   4(a_0+a_0^2-b_0d_0-b_0^2)+ 4(a_0-2a_1-2a_0c_1+a_0^2 -b_0d_0+2 b_0d_1-b_0^2)   \\ & +  8 (a_0-a_0c_1-a_1+a_0^2+b_0d_1    -b_0^2) \equiv 0 \mod 16.
\end{align*}

\vspace{1ex}
\noindent
{\bf The even values.} We know from \cite{dihedral} that the even $D_8$ determinants are divisible by $2^8$. So any even $\mathbb Z_2 \times D_8$ determinant $\mathcal{D}_1\mathcal{D}_2$ must be a multiple of $2^{16}$, and all these are achieved.
\end{proof}

\begin{proof}[Proof of Proposition \ref{Prop1}] Suppose that  $H$ is an abelian 2-group and $G=\mathbb Z_2\times H$. Then by
\cite[Theorem 2.3]{dilum2}  we can write the $G$-determinant as a product of two $H$-determinants $\mathcal{D}=\mathcal{D}_1\mathcal{D}_2$ with $\mathcal{D}_2\equiv \mathcal{D}_1$ mod $|G|$, and
\be \label{strong} \mathcal{S}(\mathbb Z_2 \times H) \subseteq \{m(m+k|G|)\; : \; m\in \mathcal{S}(H)\}.\ee
For  $G=\mathbb Z_2\times \mathbb Z_{t}$, $t=2^n$, the determinants take the form $\mathcal{D}_1=\prod_{x^t=1} F(x,1),$ $\mathcal{D}_2=\prod_{x^t=1} F(x,-1)$ for some
$F(x,y)=f(x)+yh(x)$, the coefficients of $x^i$ in $f$ and $h$ corresponding to the $a_g$ for $g=(0,i)$ and $(1,i)$ respectively. For an odd positive integer  $m,$ taking 
$$F(x,y) = \prod_{p^{\alpha}\parallel m} \left(\frac{x^p-1}{x-1}\right)^{\alpha}+k(y+1)\left(\frac{x^t-1}{x-1}\right)$$ 
achieves $m(m+k|G|)$.
 
For $G=\mathbb Z_2^n$ all  the odd values must be $1$ mod $|G|$ by \cite[Lemma 2.1]{dilum1}.  For $G_n=\mathbb Z_2^n\times \mathbb Z_4$ observe when $n=1$ that $m(m+8k)\equiv m^2\equiv 1$ mod 8, and in general
that if  $m\equiv 1$ mod $|G_{n-1}|$ then $m(m+|G_n|k)\equiv m^2 \equiv 1$ mod $|G_n|.$
\end{proof}
Interestingly \eqref{strong} also holds for the non-abelian groups $H=D_8$ and $Q_8$.

\section{The group $\mathbb Z_2 \times Q_8$}

A $\mathbb Z_2\times Q_8$ determinant will be a product of two $Q_8$ determinants, which by \cite{smallgps}
can be written in a very similar way to \eqref{defD};
\be \label{defQ} \mathcal{D}(f_1,f_2,g_1,g_2)=\mathcal{D}(1) \mathcal{D}(-1),\;\;\; \mathcal{D}(z)=m_1(z)m_2(z)\ell(z)^2, \ee
with
\begin{align*}  m_1(z) & =(f_1(1)+zf_2(1))^2-(g_1(1)+zg_2(1))^2, \\
m_2(z) & =(f_1(-1)+zf_2(-1))^2-(g_1(-1)+zg_2(-1))^2,
\end{align*}
but now
$$ \ell(z)=|f_1(i)+zf_2(i)|^2+|g_1(i)+zg_2(i)|^2. $$
Writing $Q_8=\langle A,B : A^4=1, B^2=A^2, AB=BA^{-1}\rangle,$ the coefficient
of $x^i$  in  the cubic  $f_1,f_2,g_1$ and $g_2,$  corresponds to the $a_g$ in the $\mathbb Z_2 \times Q_8$ group determinant  for $g=(0,A^i),(1,A^i),(0,BA^i)$ and $(1,BA^i)$ respectively.

We obtain a complete description of the $\mathbb Z_2 \times Q_8$ integer group determinants.

\begin{theorem} When $G=\mathbb Z_2 \times Q_8$ the odd integer group determinants are the integers $1$ mod 16, plus the integers  $9$ mod 16 of the form
$$ s_1s_2 (\ell_1\ell_2)^2,\;\; s_1,s_2\equiv -3 \mod 8,   \;\; \ell_1,\ell_2\equiv 3 \mod 4, $$
for some $s_1,s_2$ in $\mathbb Z$ and $\ell_1,\ell_2$ in $\mathbb N,$ with
\be \label{res1}  s_1\equiv s_2 \mod 16\;\;  \text{ and } \;\;  \ell_1\equiv \ell_2 \mod 8 \ee
or
\be \label{res2}  s_1\equiv s_2+8 \mod 16 \;\; \text{ and }\;\;  \ell_1\equiv \ell_2 +4\mod 8. \ee
The even values are the 
$2^{18}m,$ $m$ in $\mathbb Z$, the
$$ 2^{17}(2m+1)p^2,\;\;\; m\in \mathbb Z, \;\;p\equiv 3 \mod 4, $$
the $2^{16}m$ with $m\equiv 1,3$ or $5$ mod 8, and those with $m\equiv 7$ mod 8 of the form
\be \label{16type1}  2^{16}(8t-1)\ell^2, \;\; t\in \mathbb Z,\;\; \ell\in \mathbb N,\; \ell\equiv 1 \text{ mod } 4,\; \ell\geq 5, \ee
or  
\be \label{16type2}  (8t+3)(8k-3) 2^{16},\;\; t,k\in \mathbb Z. \ee
\end{theorem}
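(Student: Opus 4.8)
The plan is to run the argument in close parallel with the proof of Theorem~\ref{MainD8}. By \eqref{defQ} and the identity opening Section~3, a $\mathbb Z_2\times Q_8$ determinant is a product $\mathcal D_1\mathcal D_2$ of two $Q_8$ determinants, which — writing $f=f_1+f_2$, $g=g_1+g_2$ and $h=-f_2$, $k=-g_2$ — take the shape $\mathcal D_1=m_1m_2\ell_1^2$, $\mathcal D_2=m_3m_4\ell_2^2$ with $m_1,\ldots,m_4$ exactly as in the lines after \eqref{GenForm1}, but now $\ell_1=|f(i)|^2+|g(i)|^2$ and $\ell_2=|f(i)+2h(i)|^2+|g(i)+2k(i)|^2$. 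The only structural change from the $D_8$ case is the $+$ sign in $\ell$, which makes each $\ell_j$ a sum of four squares: nonnegative, and, when odd, $\equiv1$ or $3\bmod4$. This is precisely what will produce the prime-square factors in the statement. As in the $D_8$ case $\mathcal D_1\equiv\mathcal D_2\bmod2$, so both factors share a parity.

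\textbf{Achieving the values.} Following the corresponding part of Theorem~\ref{MainD8}, I would write down explicit cubics $f_1,f_2,g_1,g_2$ — small polynomials plus integer multiples of $(x+1)(x^2+1)$, added so as only to translate the answer by a controlled amount — realising each family: one-parameter families for the integers $\equiv1\bmod16$, the $2^{18}m$, and the $2^{16}m$ with $m\equiv1,3,5\bmod8$; and, for the constrained families $s_1s_2(\ell_1\ell_2)^2$, the $2^{17}(2m+1)p^2$, and \eqref{16type1}--\eqref{16type2}, a choice of $f_1(i),g_1(i)$ (and of $h,k$) making $\ell_1,\ell_2$ a prescribed multiple of the target $\ell$ or $p$ while holding the $m_i$ in the required classes mod $16$. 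Here one uses that $f(i)=f_1(i)\pm f_2(i)$ and $g(i)=g_1(i)\pm g_2(i)$ range over pairs of Gaussian integers linked only mod $2$, so that $\ell_1,\ell_2$ can be any positive integers of the relevant residue classes compatible with that link — a link which turns out to be exactly what \eqref{res1}--\eqref{res2} require.

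\textbf{The odd values.} Suppose $\mathcal D_1\mathcal D_2$ is odd. Switching $f$ and $g$ and replacing $f$ by $-f$ as necessary, as in Theorem~\ref{MainD8}, we may assume $f(1)\equiv1\bmod4$, $2\mid g(1)$; then every $m_i$ is a difference of an odd and an even square, so $m_i\equiv1$ or $5\bmod8$, and every $\ell_j$ is an odd sum of four squares, so $\ell_j\equiv1$ or $3\bmod4$. Put $s_1=m_1m_2$, $s_2=m_3m_4$, so $\mathcal D_j=s_j\ell_j^2$ and $s_j\equiv1$ or $5\bmod8$. Now expand the coefficients of $f,g,h,k$ in powers of $(x-1)$ and, exactly as in Theorem~\ref{MainD8}, compute $m_3-m_1$, $m_4-m_2$ and $\ell_2^2-\ell_1^2$ modulo $16$; this yields two congruences tying $s_2$ to $s_1\bmod16$ and $\ell_2$ to $\ell_1\bmod8$, governed by one residual bit, whose two alternatives are exactly \eqref{res1} and \eqref{res2} (both of which, one checks, force $\mathcal D_1\equiv\mathcal D_2\bmod16$, i.e.\ \eqref{strong} for $H=Q_8$, and in particular $s_1\equiv s_2\bmod8$, $\ell_1\equiv\ell_2\bmod4$). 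Finally, $s_j\equiv-3\bmod8$ cannot have all prime factors $\equiv\pm1\bmod8$, hence carries one $\equiv\pm3\bmod8$; likewise $\ell_j\equiv3\bmod4$ carries a prime factor $\equiv3\bmod4$. Running through the four cases of the common residue $(s_j\bmod8,\ \ell_j\bmod4)$ against \eqref{res1}--\eqref{res2} shows $\mathcal D_1\mathcal D_2\equiv1\bmod16$ unless both $s_j\equiv5\bmod8$ and both $\ell_j\equiv3\bmod4$, when it has precisely the $9\bmod16$ shape in the statement.

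\textbf{The even values, and the main obstacle.} When $\mathcal D_1\mathcal D_2$ is even both factors are even; I would invoke \cite{smallgps} (sharpening its $2$-adic analysis of $Q_8$ where needed) for $2^8\mid\mathcal D_j$, so $2^{16}\mid\mathcal D_1\mathcal D_2$, and then obtain the finer tiers $2^{17},2^{18}$ and the extra $p^2$, $\ell^2$ and $m\equiv7\bmod8$ restrictions by tracking $v_2$ and the odd parts of $m_1m_2\ell_1^2$ and $m_3m_4\ell_2^2$ separately, under the same mod-$16$ compatibility as before. I expect the hardest part to be this even analysis, in particular the $7\bmod8$ sub-case \eqref{16type1}--\eqref{16type2}: one must show the only ways to realise $2^{16}$ times a $7\bmod8$ integer are with $\ell$ carrying an unavoidable square factor of a prime $\equiv3\bmod4$, or with the $m$'s contributing a $(8t+3)(8k-3)$ pattern, and then exhibit both families. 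The recurring tension is that, unlike the indefinite form in the $D_8$ problem, the positive-definite $\ell$ cannot be pushed to arbitrary small values, so each admissible $\ell$ must be reconciled with the rigid congruences imposed by the $\mathbb Z_2$ factor; the refined $9\bmod16$ odd case is a gentler version of the same issue.
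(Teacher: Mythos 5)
Your framework is the right one and matches the paper's: the determinant factors as $\mathcal D_1\mathcal D_2$ with $\mathcal D_1=m_1m_2\ell_1^2$, $\mathcal D_2=m_3m_4\ell_2^2$ as in \eqref{GenForm2}, the $(x-1)$-expansion from the $D_8$ proof carries over because changing the sign in $\ell$ does not change $\ell_2-\ell_1$ mod $8$, so again $\mathcal D_1\equiv\mathcal D_2\bmod{16}$, and the positive definiteness of $\ell$ is what creates the square factors. But as written this is a plan rather than a proof, and it has two genuine gaps.

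First, in the odd case your concluding case analysis fails exactly where the theorem has content. You claim that running over the residues $(s_j\bmod 8,\,\ell_j\bmod 4)$ shows $\mathcal D_1\mathcal D_2\equiv 1\bmod{16}$ unless both $s_j\equiv 5\bmod 8$ \emph{and} both $\ell_j\equiv 3\bmod 4$. That is false: $s_j\equiv 5\bmod 8$ together with $\ell_j\equiv 1\bmod 4$ would also give $9\bmod{16}$, so your argument never establishes the constraint $\ell_1,\ell_2\equiv 3\bmod 4$ appearing in the statement. What is needed is the implication the paper proves: $m_1m_2\equiv -3\bmod 8$ forces one of $m_1,m_2$ to be $1$ and the other $-3$ mod $8$, hence $2\parallel g(1)$ and $4\mid g(-1)$ (or vice versa), hence $d_1$ odd and $\ell_1\equiv (c_0+c_1)^2+c_1^2+(d_0+d_1)^2+d_1^2\equiv 3\bmod 4$. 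Without that link the $\ell_i\equiv 3\bmod 4$ condition is unproved, and the classification of the $9$ mod $16$ values collapses.

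Second, the even case and all the achievability constructions are described but not carried out, and they are the bulk of the theorem. The assertion $2^{16}\mid\mathcal D$ is the easy step; the hard content is (i) showing $2^{17}\parallel\mathcal D$ forces a factor $p^2$ with $p\equiv 3\bmod 4$, which in the paper is a lengthy case analysis on the $2$-adic valuations of $f(\pm1),g(\pm1)$ and the parities of the $(x-1)$-coefficients via \eqref{ldiff}; (ii) the $2^{16}m$, $m\equiv 7\bmod 8$ case, which needs both that congruence analysis and a separate lemma disposing of $\ell_1=\ell_2=2$ to arrive at \eqref{16type1}--\eqref{16type2}; and (iii) for the constructions, producing for each admissible $\ell$ or $p$ two four-square representations $\ell_i=A_i^2+B_i^2+C_i^2+D_i^2$ with matched residues mod $4$ (three summands odd, one even, the even one controlled mod $4$ by $\ell_i$ mod $8$) that assemble into integral cubics realising \eqref{res1} or \eqref{res2}. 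Your "linked only mod $2$" heuristic for the Gaussian values points in the right direction, but none of these arguments is actually supplied, so the proposal does not yet prove the theorem.
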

For the three non-abelian groups of order 16 whose $\mathcal{S}(G)$ is now known we have:
$$ \mathcal{S}(\mathbb Z_2 \times Q_8)\subsetneq \mathcal{S}(\mathbb Z_2 \times D_8)\subsetneq \mathcal{S}(D_{16})=\{4m+1 \; :\; m\in \mathbb Z\} \cup \{ 2^{10}m \; : \;  m\in \mathbb Z\}. $$


\vskip0.2in
\nni
\begin{proof} {\bf Achieving the odd values.} We write $H(x):=(x+1)(x^2+1)$. 

We achieve the values 1 mod 16 from 
$$\mathcal{D}(1+kH,kH,kH,kH)=1+16k. $$ 
To achieve the specified values 9 mod 16 we write the $\ell_i$ as a sum of 4 squares. Notice that since they are 3 mod 4 
we must have 3 of them, ${A}_i,{B}_i,{C}_i$ say, odd and one, ${D}_i$, even, with $2\parallel {D}_i$ if $\ell \equiv 7$ mod 8 and $4\mid {D}_i$
if $\ell\equiv 3$ mod 8. Hence with a choice of sign we can write
$$ \ell_i ={A}_i^2 +{B}_i^2+{C}_i^2+{D}_i^2,   \;\;\; {A}_i,{B}_i,{C}_i \text{ odd, }\;\;  {D}_i  \text{ even},  $$
with
$$ A_1\equiv A_2 \mod 4,\;\;\;  B_1\equiv B_2 \mod 4,\;\; \;C_1\equiv C_2 \mod 4,$$
and 
$$D_1 \equiv D_2 \mod 4 \;\;\; \text{ if } \;\;\; \ell_1\equiv \ell_2 \mod 8, $$
and 
$$ D_1 \equiv D_2 +2\pmod 4 \;\;\; \text{ if } \;\;\;  \ell_1\equiv \ell_2 +4  \pmod 8. $$

In the case $\ell_1\equiv \ell_2$ mod 8 we get $(8m-3)(8k-3)(\ell_1\ell_2)^2$, $m\equiv k$ mod 2, from
\begin{small}
\begin{align*}
f_1 & =\frac{1}{4}(D_1+D_2) + \frac{1}{4}(B_1+B_2-2) x -\frac{1}{4}(D_1+D_2)x^2 -\frac{1}{4}(B_1+B_2+2)x^3+\frac{1}{2}(m+k)H,\\
f_2 & =\frac{1}{4}(D_1-D_2) + \frac{1}{4}(B_1-B_2) x -\frac{1}{4}(D_1-D_2)x^2 -\frac{1}{4}(B_1-B_2)x^3+\frac{1}{2}(m-k)H,\\
g_1&=\frac{1}{4}(C_1+C_2-2) + \frac{1}{4}(A_1+A_2-2) x -\frac{1}{4}(C_1+C_2+2)x^2 -\frac{1}{4}(A_1+A_2+2)x^3+\frac{1}{2}(m+k)H,\\
g_2& = \frac{1}{4}(C_1-C_2) + \frac{1}{4}(A_1-A_2) x -\frac{1}{4}(C_1-C_2)x^2 -\frac{1}{4}(A_1-A_2)x^3+\frac{1}{2}(m-k)H.
\end{align*}
\end{small}
In the case $\ell_1\equiv \ell_2 +4$ mod 8 we get $(8m-3)(5-8k)(\ell_1\ell_2)^2$, $m\equiv k$ mod 2, from
\begin{small}
\begin{align*}
f_1 & =\frac{1}{4}(D_1+D_2-2) + \frac{1}{4}(B_1+B_2-2) x -\frac{1}{4}(D_1+D_2+2)x^2 -\frac{1}{4}(B_1+B_2+2)x^3+\frac{1}{2}(m+k)H,\\
f_2 & =\frac{1}{4}(D_1-D_2+2) + \frac{1}{4}(B_1-B_2) x -\frac{1}{4}(D_1-D_2-2)x^2 -\frac{1}{4}(B_1-B_2)x^3+\frac{1}{2}(m-k)H,\\
g_1&=\frac{1}{4}(C_1+C_2-2) + \frac{1}{4}(A_1+A_2-2) x -\frac{1}{4}(C_1+C_2+2)x^2 -\frac{1}{4}(A_1+A_2+2)x^3+\frac{1}{2}(m+k)H,\\
g_2& = \frac{1}{4}(C_1-C_2) + \frac{1}{4}(A_1-A_2) x -\frac{1}{4}(C_1-C_2)x^2 -\frac{1}{4}(A_1-A_2)x^3+\frac{1}{2}(m-k)H.
\end{align*}
\end{small}

\vskip0.01in
\nni
{\bf Odd values must be of the stated form.} We proceed as in the case of $\mathbb Z_2 \times D_8,$ with
\eqref{GenForm1} becoming
\be \label{GenForm2} \mathcal{D}_1=m_1m_2\ell_1^2, \;\;\;  \mathcal{D}_2=m_3m_4\ell_2^2, \ee
where again
$$ m_1=f(1)^2-g(1)^2,\; \;\; m_2=f(-1)^2-g(-1)^2, $$
\begin{align*}  m_3 & =(f(1)+2h(1))^2-(g(1)+2k(1))^2,  \\
m_4 & =(f(-1)+2h(-1))^2-(g(-1)+2k(-1))^2,
\end{align*}
but this time
$$ \ell_1=|f(i)|^2+|g(i)|^2,\;\; \ell_2=|f(i)+2h(i)|^2+|g(i)+2k(i)|^2. $$
This does not change $\ell_1-\ell_2$ mod 8, so again we must have $\mathcal{D}_1\equiv \mathcal{D}_2$ mod 16
and $\mathcal{D}_1\mathcal{D}_2$ is 1 or 9 mod 16. The 1 mod 16 are all achievable, so assume $\mathcal{D}_1\mathcal{D}_2\equiv 9$ mod 16.
Since all the $m_i\equiv m_1$ mod 4, plainly the $\mathcal{D}_i\equiv m_1^2\ell_i^2\equiv 1$ mod 4, so we can assume that $\mathcal{D}_1,\mathcal{D}_2\equiv -3$ mod 8. Since the $\ell_i^2\equiv 1$ mod 8,
we get $m_1m_2,m_3m_4\equiv -3$ mod 8.
Since $m_1$ and $m_2$ are 1 or $-3$ mod 8 we must have one of each, and $2\parallel g(1)$ and $4\mid g(-1)$ or vice versa.
Hence $d_1$ must be odd and 
$$ \ell_1 \equiv (c_0+c_1)^2+c_1^2 +  (d_0+d_1)^2+d_1^2 \equiv 3 \mod 4. $$
From above we also  know 
that  $\ell_1\equiv \ell_2$ mod 4. 
That is, $\mathcal{D}_1\mathcal{D}_2 =s_1s_2 (\ell_1\ell_2)^2$ with $s_1\equiv s_2 \equiv -3$ mod 8 and $\ell_1\equiv \ell_2\equiv 3$ mod 4.
Plainly we have $s_1s_2\equiv 9$ mod 16 if $s_1\equiv s_2\equiv -3$ or 5  mod 16 and $s_1s_2\equiv 1$ mod 16  if one is $-3$ and the other 5 mod 16, while $\ell_1\ell_2\equiv 1$ mod 8 and $(\ell_1\ell_2)^2 \equiv 1 $ mod 16 if $\ell_1\equiv \ell_2\equiv 3$ or $7$ mod 8
and $\ell_1\ell_2\equiv -3$ mod 8  and $(\ell_1\ell_2)^2 \equiv 9 $ mod 16 if one  is $3$ and the other $7$ mod 8.
Hence the restrictions \eqref{res1} and \eqref{res2}  to get $\mathcal{D}_1\mathcal{D}_2\equiv 9$ mod 16.

\vskip0.2in
\nni
{\bf Achieving the  even values}. 
We obtain the $2^{18}m$, $m$ odd, from
\begin{align*}
f_1& =1+x^2 +\frac{1}{2}(m+1) H,\hskip 0.2in
f_2=\frac{1}{2}(m-1) H,\\
g_1& =-(1+x)+\frac{1}{2}(m+1) H,\hskip0.2in
g_2 =\frac{1}{2}(m-1) H,
\end{align*}
and the $2^{19}m$ from
\begin{align*}
f_1&= 1+x+x^2-mH,\hskip0.2in
f_2 = -x-x^3-mH,\\
g_1&= x+x^3+mH,\hskip0.2in
g_2= -x^3+mH. 
\end{align*}
We get the $2^{16}(4m+1)$ from
\begin{align*}
f_1 &=1+x+x^2+x^3+mH,\hskip0.2in
f_2=mH, \\
g_1& =1-x +mH,\hskip0.2in
g_2=mH.
\end{align*}
We get $2^{16}(8t+3)(4s+1)$ from
\begin{align*}
f_1& = 1+x+x^2+x^3+(t+s)H,\hskip0.2in 
f_2 =1+x^2-x^3+(t-s)H,\\
g_1& = (t-s)H,\hskip0.2in
g_2=x^3+ (t+s)H,
\end{align*}
with $s=0$ giving us the $2^{16}m$, $m\equiv 3$ mod 8, and $s=2k-1$ the values \eqref{16type2}.
For $\ell \geq 5$ with $\ell\equiv 1$ mod 4 we can write  $2\ell-4\equiv 6$ mod 8 as a sum of three squares with two of 
them odd and the other 2 mod 4:
$$ 2\ell = (4a+1)^2+2^2+(4c-1)^2 +(4d-2)^2 $$
and we can get $2^{16}(4m-1)\ell^2$, and hence \eqref{16type1}, from
\begin{align*}
f_1 & = (1-x+x^2)+ a(1-x^2)+mH,\\
f_2 &=-x(1+x)+a(1-x^2)+mH,\\
g_1& =-x+(1-x^2)(c+dx)+ mH,\\
g_2& =-(1+x) +(1-x^2)(c+dx)+mH. 
\end{align*}
For $p\equiv 3$ mod 4 we write $2p=A^2+B^2+C^2+D^2$ where, since $2p\equiv 6$ mod 8, two of $A,B,C,D$  must be odd and two even, with one of them divisible by 4, the other 2 mod 4.  Changing signs as necessary we assume that $A=1+4a$, $B=4b$, $C=1+4c$, and
$D=2+4d$. We achieve $2^{17}(2m+1)p^2$ with
\begin{align*}
f_1 & = (1+x)(x^2+1)+a(1-x^2)+bx(1-x^2)+mH,\\
f_2 &=1+(x-1)(x^2+1)+a(1-x^2)+bx(1-x^2)+mH,\\
g_1& =1+x+c(1-x^2)+dx(1-x^2)+ mH,\\
g_2& =x +c(1-x^2)+dx(1-x^2)+mH. 
\end{align*}
\vskip0.01in
\nni
{\bf Even values must be of the stated form.}  We know if the $\mathbb Z_2 \times Q_8$ determinant is even, then both $Q_8$ determinants
are even, and by \cite{smallgps} must each be multiples of $2^8$. Hence the even determinants must be multiples of $2^{16}$.
Note, if the determinant is even we must have $f(1)$ and $g(1)$ the same parity, and all the terms $m_1,m_2,m_3,m_4,\ell_1,\ell_2$ in \eqref{GenForm2} must be even.

\vskip0.1in
\nni
{\bf  The $2^{17}\parallel \mathcal{D}$ are of the stated form.} Suppose that we had a determinant $2^{17}m$, $m$ odd, with $m$ not divisible by the square of a prime 3 mod 4.
Writing 
\be \label{ldiff} \ell_2-\ell_1 \equiv 4(a_0c_0-a_0c_1-a_1c_0+a_0^2)+ 4(b_0d_0-b_0d_1-b_1d_0+b_0^2) \mod 8 \ee
we see that $2\parallel \ell_1,\ell_2$ or $4\mid \ell_1,\ell_2$. If $f(1),g(1)$ are both odd then $2^3\mid m_1,m_2,m_3,m_4$
and we must have $2\parallel \ell_1,\ell_2$ (else $2^{12+8}\mid \mathcal{D}$). Now if $2^{u}\parallel f(1)$ and $2^{v}\parallel g(1)$ with $u,v\geq 1$ then $2^{2\min\{u,v\}}\parallel m_1$ if $u\neq v$, 
while if $u=v$ we have $2^{2u+3}\mid m_1$. Likewise for $m_2,m_3,m_4$.  To obtain an odd power of two
we must therefore have at least one of the $m_i$ with $u=v$. We can not have two of them (else $2^{5+5+4+4}\mid \mathcal{D}$). Again we can assume that $2\parallel \ell_1,\ell_2$
(otherwise $2^{5+6+8}\mid \mathcal{D}$). Since $\ell_1$ and $\ell_2$ do not contain any primes $3$ mod 4 we have $\ell_1\equiv \ell_2 \equiv 2$ mod 8 and \eqref{ldiff} gives
\be \label{diffcong} a_0c_0-a_0c_1-a_1c_0+a_0^2+ b_0d_0-b_0d_1-b_1d_0+b_0^2\equiv 0 \mod 2. \ee

Suppose first that $f(1)$, $g(1)$ are odd. Since $c_0$ and $d_0$ are odd,  \eqref{diffcong} becomes
\be \label{cong1} -a_0c_1-b_0d_1 \equiv a_1+b_1  \mod 2. \ee
 To get power 17, rearranging if necessary to make the highest power on $m_1,$ we must have $2^4\parallel m_1$, $2^3\parallel m_2,m_3,m_4.$ That is, $m_1\equiv 0 \mod 16$, and $m_2\equiv m_3\equiv m_4\equiv 8 \mod 16.$
From 
$$m_1=c_0^2- d_0^2\equiv 0 \mod 16,\;\;\; m_3=(c_0+2a_0)^2- (d_0+2b_0)^2\equiv 8 \mod 16 $$
we get
\be \label{cong2}    a_0 c_0+a_0^2-b_0^2-b_0d_0 \equiv 2 \mod 4.  \ee
From
\begin{align*} m_4  & \equiv (c_0-2c_1+4c_2+2a_0-4a_1)^2 -( d_0-2d_1+4d_2 +2b_0-4b_1)^2  \mod 16 \\
 & \equiv m_2+4a_0^2 +4(a_0c_0-2a_0c_1-2a_1c_0) -4b_0^2 -4(d_0b_0-2d_1b_0-2d_0b_1) \mod 16,
\end{align*}
we get
$$ a_0^2+a_0c_0-b_0^2-b_0d_0-2(a_0c_1+a_1  -d_1b_0-b_1)\equiv 0 \mod 4. $$
Applying \eqref{cong1}, this becomes  $a_0^2+a_0c_0-b_0^2-b_0d_0\equiv 0 \mod 4,$ contradicting \eqref{cong2}.

Now suppose that $2^u\parallel f(1), g(1)$, $u\geq 1$. Since $c_0$ and $d_0$ are even, \eqref{diffcong} becomes
\be \label{cong3} -a_0c_1+a_0^2-b_0d_1+b_0^2 \equiv 0 \mod 2.     \ee
Notice we can't have $c_1,d_1$ both odd or both even, else
$$\ell_1= \abs{c_0-c_1+ic_1 +2\alpha}^2+  \abs{d_0-d_1+id_1 +2\beta}^2\equiv 2c_1^2+2d_1^2 \mod 4 $$
would be divisible by 4.

We can't have $a_0,b_0$ both odd, else \eqref{cong3} becomes $c_1+d_1\equiv 0$ mod 2, contradicting $c_1,d_1$ having opposite parity.
If $u=1$ we can rule out $a_0,b_0$ both even, else $2\parallel f(1)+2h(1)=c_0+2a_0$, $g(1)+2k(1)=d_0+2b_0$ 
(we ruled out $m_1$ and $m_3$ both having $u=v$). 
If $u\geq 2$ we can't have $a_0,b_0$ both even, else 4 divides both terms, $2^4\mid m_3$, $2^7\mid m_1$ and $2^{7+2+4+2+4}\mid D$.  So $a_0,b_0$ like $c_1,d_1$ have opposite parity.
From
$$ f(-1)+2h(-1) \equiv c_0-2c_1 +2a_0 \mod 4,\;\;\; g(-1)+2k(-1)=d_0-2d_1+2b_0 \mod 4 $$
we can't have $a_0\equiv c_1$ mod 2 and $b_0\equiv d_1$ mod 2, else if $u=1$ we would have a single 2 dividing both
(ruled out) and if $u=2$ we would have 4 dividing both and $2^4\mid m_4$, $2^7\mid m_1.$
Hence we must have $a_0\equiv d_1$, $b_0\equiv c_1$ mod 2 and \eqref{cong3} becomes $a_0^2+b_0^2\equiv 0$ mod 2,
contradicting that $a_0,b_0$ have opposite parity.

\vskip0.1in
\nni
{\bf  The $2^{16}\parallel \mathcal{D}$ are of the stated form.} Suppose now that we have $\mathcal{D}=2^{16}m,$ with $m\equiv -1$ mod 8, that is not of the form 
\eqref{16type1} or \eqref{16type2}. Note, $\ell_1\ell_2$ does not contain
a prime $p\equiv 1$ mod 4 or two primes $p_1,p_2\equiv 3$ mod 4 (else it will be type \eqref{16type1}  with $\ell=p$ or $p_1p_2$),
and $\mathcal{D}$ has no factor $\pm 3$ mod 8 (else it will be type \eqref{16type2}).

If $2^2\mid \ell_1$ or $\ell_2$ then $2^2\parallel \ell_1,\ell_2$ and $2^2\parallel m_1,m_2,m_3,m_4$ and $f(1),g(1)$ are
even. Now $m_1/4=(f(1)/2)^2-(g(1)/2)^2\equiv \pm 1$ mod 8 and likewise for $m_2/4,m_3/4$ and $m_4/4$, with their 
product $-1$ mod 8. Switching $f$ and $g$ as necessary and rearranging, we can assume $m_1/4\equiv -1$ mod 8
and $m_2/4,m_3/4,m_4/4\equiv 1$ mod 8. That is $4\mid f(1)/2$ and $f(1)/2+h(1),f(-1)/2,f(-1)/2+h(-1)$ are all odd.
From the first two $h(1)$ is odd, from the  second  two, $h(-1)$ is even, but $h(1)$ and $h(-1)$ must have the same parity.
Hence we can assume that $2\parallel \ell_1,\ell_2$, moreover that $\ell_1=\ell_2=2,$ or one is 2 and the other $2p$ 
for some prime $p=3$ mod 4.

 If $f(1)=c_0$ and $g(1)=d_0$ are odd, then plainly $2^3\parallel m_1,m_2,m_3,m_4$. We rule out one of $\ell_1$, $\ell_2$
being  2  mod 8 and the other 6 mod 8. In this case \eqref{ldiff} becomes
\be \label{diff6odd}    1\equiv -a_0c_1 -a_1-b_0d_1-b_1 \mod 2   \ee
But  the difference of 
$$m_4\equiv (c_0-2c_1+4c_2+2a_0-4a_1)^2-(d_0-2d_1+4d_2+2b_0-4b_1)^2 \equiv 8 \mod 16 $$
and
$$m_2\equiv (c_0-2c_1+4c_2)^2-(d_0-2d_1+4d_2)^2 \equiv 8 \mod 16 $$
gives
$$ 4(a_0^2+a_0c_0-b_0^2-b_0d_0) -8(a_1c_0+a_0c_1-b_1d_0-b_0d_1) \equiv 0 \mod 16, $$
where
$$  4(a_0^2+a_0c_0-b_0^2-b_0d_0)=m_3-m_1 \equiv 0 \mod 16,$$
and $a_1+a_0c_1-b_1-b_0d_1\equiv 0$ mod 2, contradicting \eqref{diff6odd}. This just leaves us with the case $\ell_1=\ell_2=2$
considered in the lemma below.

Suppose $f(1)=c_0=2c$, $g(1)=d_0=2d$ are even. If $c$ and $d$ have opposite parity then $2^2\parallel m_1$, if both are odd
then $2^5\mid m_1$ and if $c=2c'',d=2d''$ then $2^4\parallel m_1$ if $c''$ and $d''$ have opposite parity and $2^6\mid m_1$ otherwise. Moreover if $c$ and $d$ have the same parity and $2^2$  divides
$$   m_1/4=c^2-d^2, $$
then $2^4$ must also divide at least one of the other $m_i$. To see this observe that if $a_0$ and $b_0$ have the same parity then $2^2$ divides
\be \label{m3} m_3/4=(c+a_0)^2-(d+b_0)^2, \ee
if $c_1$ and $d_1$ have the same parity then $2^2$ divides
\be \label{m2}  m_2/4 \equiv (c-c_1+2c_2)^2 - (d-d_1+2d_2)^2 \mod 8, \ee
and if  both $a_0$ and $b_0,$ and $c_1$ and $d_1$ have opposite parity, then $a_0-c_1$ and $b_0-d_1$ have the same parity and $2^2$ divides
\be \label{m4} m_4/4\equiv (c-c_1+2c_2+a_0-2a_1)^2-(d-d_1+2d_2+b_0-2b_1)^2 \mod 8. \ee
Hence, rearranging as necessary, we can assume that $2^4\parallel m_1$ and one other $m_i$, and $2^2\parallel m_i$ for the other two $m_i$. In particular $c_0=4c'$ and $d_0=4d'$ with $c'$, $d'$ of opposite parity. Suppose now that one of
$\ell_1$, $\ell_2$ is $2$ mod 8 and the other 6 mod 8, so that \eqref{ldiff} becomes
\be \label{diff2} 1\equiv -a_0c_1+a_0^2-b_0d_1+b_0^2 \mod 2. \ee
Notice that this rules out $a_0,b_0$ both even or $c_1,d_1$  both odd. We can rule out $a_0,b_0$ both odd, else $2^3\mid m_3/4$ in \eqref{m3},
and $c_1,d_1$ both even else 
$$ 4\mid \ell_1 =\abs{4c'+c_1(i-1)+2i\alpha}^2+\abs{4d'+d_1(i-1)+2i\beta}^2. $$
Hence $a_0-c_1$ and $b_0-d_1$ have the same parity, but can't be odd, else $2^3\mid m_4/4$ in \eqref{m4}. Hence
$c_1\equiv a_0$ mod 2 and $d_1\equiv b_0$ mod 2, violating \eqref{diff2}. This just leaves the case $\ell_1=\ell_2=2$
dealt with in the next lemma.
\end{proof}

\begin{lemma} All $\mathbb Z_2 \times Q_8$ determinants $2^{16}m$ with $m\equiv 7$ mod 8 and  $\ell_1=\ell_2=2$ in \eqref{GenForm2} must be of the form \eqref{16type2}.
\end{lemma}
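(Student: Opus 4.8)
We want to show that if $\mathcal{D}_1\mathcal{D}_2 = 2^{16} m$ with $m \equiv 7 \bmod 8$ and $\ell_1 = \ell_2 = 2$ in \eqref{GenForm2}, then $m$ has the form $(8t+3)(8k-3)$; that is, $\mathcal{D}$ is of type \eqref{16type2}. Since $\ell_1 = \ell_2 = 2$, we have $\mathcal{D}_1 = 4 m_1 m_2$ and $\mathcal{D}_2 = 4 m_3 m_4$, so $m = \frac{1}{2^{12}} m_1 m_2 m_3 m_4$ and it suffices to analyze the 2-adic and mod-$16$ behavior of the four quantities $m_1, m_2, m_3, m_4$ under the constraint $|f(i)|^2 + |g(i)|^2 = 2$ and $|f(i)+2h(i)|^2 + |g(i)+2k(i)|^2 = 2$.

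First I would extract what $\ell_1 = 2$ forces. Writing $f(i) = (c_0 - c_1 + c_3) + i(c_1 - c_2 - c_3)$ and similarly for $g$, the condition $|f(i)|^2 + |g(i)|^2 = 2$ is very restrictive: it means that among the two Gaussian integers $f(i), g(i)$, one has norm $2$ and the other norm $0$, or... actually norm $1$ and norm $1$ is impossible since a sum of two squares equal to $1$ forces $f(i) \in \{\pm 1, \pm i\}$, giving $|f(i)|^2 = 1$ — wait, $1 + 1 = 2$ is exactly two units. So the generic case is $|f(i)|^2 = |g(i)|^2 = 1$, i.e. $f(i), g(i) \in \{\pm 1, \pm i\}$; the other case is $\{|f(i)|^2, |g(i)|^2\} = \{0, 2\}$. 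In each case this pins down the residues of $c_0, c_1, c_2, c_3$ (resp. the $d_j$) modulo small powers of $2$. Applying the same reasoning to $\ell_2 = 2$ pins down $a_0, b_0, \dots$ similarly. I would enumerate these finitely many cases (up to the symmetries already invoked in the proof: swapping $f \leftrightarrow g$, negating, and the $\mathbb Z_2$-swap that exchanges $(m_1,m_2)\leftrightarrow(m_3,m_4)$), and in each case compute $m_1 m_2 m_3 m_4 \bmod 2^{13}$ or so.

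The key identity to exploit is that $m_1, m_2, m_3, m_4$ are all differences of two squares of integers of a controlled parity, and that $m_3 - m_1$, $m_4 - m_2$ have the explicit expansions already derived in the main proof (the $4(a_0 c_0 + a_0^2 - b_0 d_0 - b_0^2)$-type formulas). Because $m_1 m_2 \equiv \mathcal{D}_1/4$ and $m_3 m_4 \equiv \mathcal{D}_2/4$ with $\mathcal{D}_1 \equiv \mathcal{D}_2 \bmod 16$ (this congruence was already established for $\mathbb Z_2 \times Q_8$ in the paragraph proving "odd values must be of the stated form," and its derivation only used $\ell_1 \equiv \ell_2 \bmod 8$, which holds here), one shows $m_1 m_2 \equiv m_3 m_4 \bmod 4$ after dividing out the common power of $2$. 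The target is then to show each of $m_1 m_2/2^{?}$ and $m_3 m_4/2^{?}$ is forced to be $\equiv 3 \cdot(-3) \bmod$ something — or more precisely that $m_1 m_2$ contributes a factor $\equiv 3 \bmod 8$ and $m_3 m_4$ one $\equiv -3 \bmod 8$ (or vice versa), which is exactly \eqref{16type2}. I would split on the $2$-adic valuations $u = v_2(f(1))$, $v = v_2(g(1))$ just as in the main proof: the case $f(1), g(1)$ both odd gives $2^3 \parallel m_i$ for each $i$, so $m_1 m_2 m_3 m_4 / 2^{12} = (m_1/8)(m_2/8)(m_3/8)(m_4/8)$ is an odd number I can pin down mod $8$; the even cases are handled by the valuation bookkeeping already set up (where $2^4 \parallel m_1$ and one other, $2^2\parallel$ the remaining two) so again the total power is $2^{12}$ and I control the odd cofactor.

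\textbf{Main obstacle.} The delicate point is the case $f(1), g(1)$ both odd with one of $\ell_1, \ell_2$ genuinely $2 \bmod 8$ (not just the toy case $\ell = 2$): in the main proof that subcase was explicitly deferred to this lemma. Here I must show that the sign pattern of the four quantities $m_i/8 \bmod 8$, together with the congruences $m_3 \equiv m_1$, $m_4 \equiv m_2 \bmod 16$ and the mod-$2$ relation coming from $\ell_2 - \ell_1$, cannot produce an odd cofactor that is a nonsquare-free obstruction — i.e. it must be expressible as $(8t+3)(8k-3)$. Concretely, I expect to need: $m_1/8 \equiv m_3/8 \bmod 2$ and $m_2/8 \equiv m_4/8 \bmod 2$ are \emph{not} automatic, and tracking whether $(m_1/8)(m_2/8) \equiv \pm(m_3/8)(m_4/8) \bmod 8$ is where the real combinatorics lives. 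I would organize this as a short case table on $(m_i/8 \bmod 8)$, eliminate the patterns giving $m \equiv 7 \bmod 8$ that are \emph{not} of the form $(8t+3)(8k-3)$ by deriving a contradiction with \eqref{diffcong}-type congruences, and conclude that the only surviving patterns are precisely those realizing \eqref{16type2}. The bookkeeping is routine but voluminous; the conceptual content is just that $\ell_1 = \ell_2 = 2$ kills all the "exotic" contributions (the ones involving primes $1 \bmod 4$ or squared primes $3 \bmod 4$), leaving only the split $3 \cdot (-3)$ structure.
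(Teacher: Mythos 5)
Your reduction of $\ell_1=\ell_2=2$ to the two cases ($f(i),g(i)$ both units, or one of norm $2$ and one zero, according to the parity of $f(1),g(1)$) is the right starting point, and the valuation bookkeeping ($2^3\parallel m_i$ when $f(1),g(1)$ are odd, the $2^4/2^2$ split otherwise) matches what is needed. But the core of your plan --- pinning down the residues $m_i/2^{v}\bmod 8$ by congruence bookkeeping and showing that $m_1m_2$ and $m_3m_4$ are forced to contribute factors $3$ and $-3$ mod $8$ respectively --- cannot work, because those residues are genuinely not determined: the classes $\pm 3\bmod 8$ do occur for individual $m_i$ (that is exactly how the achievable values \eqref{16type2} arise), and a pattern such as $m_1/8\equiv m_2/8\equiv m_3/8\equiv 1$, $m_4/8\equiv 7\bmod 8$ is not excluded by any congruence you list. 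More fundamentally, whether $m$ ``is of the form $(8t+3)(8k-3)$'' is not a congruence condition on $m$ or on the $m_i$ at all: it is equivalent to $m$ possessing a divisor $\equiv\pm 3\bmod 8$, i.e.\ a prime factor $\equiv\pm 3\bmod 8$. So ``eliminating the residue patterns that are not of the form \eqref{16type2}'' is not a well-posed case analysis.

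The missing idea is to argue by contraposition on the factorization of $m$: assume every prime factor of $m$ is $\pm 1\bmod 8$, so that the odd part of each $m_i$ is itself $\pm 1\bmod 8$, and deduce $m\equiv 1\bmod 8$; hence any $m\equiv 7\bmod 8$ must carry a prime factor $\pm 3\bmod 8$, and the complementary divisor is automatically $\mp 3\bmod 8$, which is \eqref{16type2}. It is this hypothesis that powers the bookkeeping. For instance, in the even case one normalizes $f(x)=1+x+(x^2+1)v(x)$, $g(x)=(x^2+1)u(x)$, gets $m_2/4=v(-1)^2-u(-1)^2$, and excluding the residue $-3\bmod 8$ forces $4\mid u(-1)$; this propagates to determine $k_1(\pm 1)$ mod $2$ and to pair the sign of $m_1/2^4$ with that of $m_3/2^4$ or $m_4/2^4$, so the product of the four odd parts is $+1\bmod 8$ in every subcase. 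Without assuming the odd parts avoid $\pm 3\bmod 8$, none of these divisibility constraints are available and the analysis does not close. (Also, your appeal to $\mathcal{D}_1\equiv\mathcal{D}_2\bmod{16}$ is shaky here: its derivation in the odd case used $m_1,m_2,\ell_1^2\equiv 1\bmod 4$, which fails when everything is even.)
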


\begin{proof} Suppose that $\mathcal{D}=2^{16}m$, where $\ell_1=\ell_2=2,$ and all the factors of $m$ are $\pm 1$ mod 8. We show that $m\equiv 1$ mod 8. Hence any with $m\equiv -1$ mod 8 must have a factor $\pm 3$ mod 8 and be of the form  \eqref{16type2}. 

\vskip0.1in
\nni
{\bf Case 1: $f(1)$ and $g(1)$ are even}. Since $\abs{f(i)}^2 $ and $\abs{g(i)}^2$ are both even, we must have one of them 2 and the other 0. Switching $f$ and $g$ and replacing $f(x)$ by $\pm f(\pm x)$ as necessary, we can assume that $f(i)=1+i$ and $g(i)=0.$ Hence we can write
$$f(x)=1+x+(x^2+1)v(x), \;\;\; g(x)=(x^2+1)u(x). $$
 Note $2^2$ divides $\abs{g(i)+2k(i)}^2,$ so this term must also be zero,
while $f(i)+2h(i)=\ve+\delta  i$ with $\delta,\ve=\pm 1,$ and  
$$f(x)+2h(x)=\ve+\delta x + (x^2+1)(v(x)+2h_1(x)),\;\;\; g(x)+2k(x)=(x^2+1)(u(x)+2k_1(x)). $$
Hence
\begin{align*}
m_1/4 & = (1+v(1))^2 -u(1)^2,\;\;\;\;\;  m_2/4=v(-1)^2-u(-1)^2,\\
m_3/4 &= \left(\frac{1}{2}(\ve +\delta) +v(1)+2h_1(1)\right)^2 - (u(1)+2k_1(1))^2,\\
m_4/4 &= \left(\frac{1}{2}(\ve -\delta) +v(-1)+2h_1(-1)\right)^2 - (u(-1)+2k_1(-1))^2.
\end{align*}
Note, one of $1+v(1)$ and $v(-1)$ must be odd, and hence $u(1),u(-1)$ must be even (else $2^3\mid m_1/4$ or $m_2/4$).

\vskip0.1in
\noindent
{\bf (i) Suppose that $v(1)$ is odd.} Since $m_2/4\not\equiv -3$ mod 8 we must have $m_2/4\equiv 1$ mod 8, $4\mid u(-1)$, 
and
$$ \frac{m_1}{2^4}=\left(\frac{1+v(1)}{2}\right)^2 - \left(\frac{u(1)}{2}\right)^2. $$
If $\delta =-\ve$ then $m_3/4 \equiv 1$ mod 8, $4\mid  (u(1)+2k_1(1)),$ and
$$ \frac{m_4}{2^4}=\left( \frac{\ve + v(-1)}{2} +h_1(-1)\right)^2-\left(\frac{u(-1)}{2}+k_1(-1)\right)^2. $$
If $2\mid u(1)/2$ then $2\mid k_1(1),$ $2\mid (u(-1)/2+k_1(-1)),$ and $m_1/2^4,m_4/2^4\equiv 1$ mod 8.
If $2\nmid u(1)/2$  then $2\nmid k_1(1)$,  $2\nmid (u(-1)/2+k_1(-1)),$ and $m_1/2^4,m_4/2^4\equiv -1$ mod 8.
In both cases $m_1m_2m_3m_4/2^{12}\equiv 1$ mod 8.

If $\delta =\ve$  we have $m_4/4\equiv 1$ mod 8, $4\mid (u(-1)+2k_1(-1)),$ $2\mid k_1(-1),k_1(1)$ and 
$$ \frac{m_3}{2^4}=\left( \frac{\ve + v(1)}{2} +h_1(1)\right)^2-\left(\frac{u(1)}{2}+k_1(1)\right)^2. $$
If $2\mid u(1)/2$ then $m_1/2^4,m_3/2^4\equiv 1$ mod 8 and if $2\nmid u(1)/2$ both are $-1$ mod 8. Again 
$m_1m_2m_3m_4/2^{12}\equiv 1$ mod 8.

\vskip0.1in
\noindent
{\bf (ii) Suppose that $v(1)$ is even.} In this case $m_1/4\equiv 1$ mod 8, $4\mid u(1)$ and
$$ \frac{m_2}{2^4}= \left(   \frac{v(-1)}{2}\right)^2 - \left(\frac{u(-1)}{2}\right)^2. $$
If $\delta =-\ve$ then $m_4/4 \equiv 1$ mod 8, $4\mid  (u(-1)+2k_1(-1)),$ and
$$ \frac{m_3}{2^4}=\left( \frac{v(1)}{2} +h_1(1)\right)^2-\left(\frac{u(1)}{2}+k_1(1)\right)^2. $$
If $2\mid u(-1)/2$ then $m_2/2^4\equiv 1$ mod 8 and $2\mid k_1(-1)$, $2\mid (u(1)/2+k_1(1))$ and $m_3/2^4\equiv 1$ mod 8.  If $2\nmid u(-1)/2$ then $m_2/2^4\equiv -1$ mod 8, $2\nmid k_1(-1)$, $2\nmid (u(1)/2+k_1(1))$ and $m_3/2^4\equiv -1$ mod 8. Again, $m_1m_2m_3m_4/2^{12}\equiv 1$ mod 8.

If $\delta =\ve$ then $m_3/4 \equiv 1$ mod 8, $4\mid  (u(1)+2k_1(1)),$ $2\mid k_1(1), k_1(-1)$ and
$$ \frac{m_4}{2^4}=\left( \frac{v(-1)}{2} +h_1(-1)\right)^2-\left(\frac{u(-1)}{2}+k_1(-1)\right)^2. $$
If $2\mid u(-1)/2$ then $m_2/2^4,m_4/2^4\equiv 1$ mod 8.  If $2\nmid u(-1)/2$ then $m_2/2^4,m_4/2^4\equiv -1$ mod 8.  In both cases $m_1m_2m_3m_4/2^{12}\equiv 1$ mod 8.

In conclusion,  there are no cases where $\mathcal{D}/2^{16}=m_1m_2m_3m_4/2^{12}\equiv -1$ mod 8.

\vskip0.2in
\nni
{\bf Case 2: $f(1)$ and $g(1)$ are odd}. In this case we have $2^3\parallel m_1,m_2,m_3,m_4$. From $\ell_1=\ell_2=2$
we must have  $f(i),g(i),f(i)+2h(i),g(i)+2k(i)=\pm 1$ or $\pm i$. Multiplying the $f$  and $h$ or the  $g$ and $k$ through
by $\pm 1$ or $\pm x$ we can assume that $f(i)=1$ and $g(i)=1$ and 
$$f(x)=1+(x^2+1)v(x),\;\; g(x)=1+(x^2+1)u(x). $$
Clearly we must have  $f(i)+2h(i),g(i)+2k(i)=\pm 1$ and
$$ f(x)+2h(x)=\ve+(x^2+1)(v(x)+2h_1(x)),\;\;  g(x)+2k(x)=\delta+(x^2+1)(u(x)+2k_1(x)) $$
for some $\ve,\delta=\pm 1.$ Hence
\begin{align*} \frac{m_1}{4} =\left(1+u(1)+v(1)\right) & (v(1)-u(1)), \\
\frac{m_3}{4}  = \left( \frac{\ve+\delta}{2} +v(1)+u(1) +2h_1(1)+2k_1(1)\right) &\left(\frac{\ve-\delta}{2} +v(1)-u(1) +2h_1(1)-2k_1(1)\right).\end{align*}
Similarly for $m_2/4$ and $m_4/4$ with $u(-1),v(-1),h_1(-1),k_1(-1)$ in place of $u(1),v(1),h_1(1),k_1(1).$ 

\vskip0.1in
\noindent
{\bf (i) Suppose that $u(1)+v(1)$ is even.}  In this case $m_1/8=\alpha_1\alpha_2$ with
$$  \alpha_1=1+u(1)+v(1),\;\;\; \alpha_2=\frac{1}{2}(v(1)-u(1))  $$
When  $\delta =-\ve$ we have $m_3/8=\lambda_1\lambda_2,$ with
\begin{align*}
\lambda_1 & =   \frac{1}{2}(v(1)+u(1)) +h_1(1)+k_1(1),\\
\lambda_2& =\ve +v(1)-u(1)+2h_1(1)-2k_1(1). 
\end{align*}
Recall that by assumption all these factors are $\pm 1$ mod 8. 
Since
$$ \lambda_1= \ve \alpha_2 +\frac{1}{2}(1-\ve)v(1)+\frac{1}{2}(1+\ve) u(1) +h_1(1)+k_1(1), $$
we have $2\mid \frac{1}{2}(1-\ve)v(1)+\frac{1}{2}(1+\ve) u(1) +h_1(1)+k_1(1)$
and
$$ \lambda_2= \ve \alpha_1 +(1-\ve)v(1)-(1+\ve)u(1)+2h_1(1)-2k_1(1) \equiv \ve \alpha_1 \mod 4. $$
Hence $\lambda_2= \ve \alpha_1 $ mod 8 and $4\mid \frac{1}{2}(1-\ve)v(1)-\frac{1}{2}(1+\ve)u(1)+h_1(1)-k_1(1). $
So
$$ \lambda_1\lambda_2 \equiv  (\ve \alpha_2 +(1+\ve)u(1)+2k_1(1) )\ve \alpha_1 \equiv \alpha_1\alpha_2 + (1+\ve)u(1)+2k_1(1)  \mod 4, $$
and $m_3/8\equiv m_1/8$ mod 4, and $m_1m_3/2^6 \equiv 1$ mod 8, iff $2\mid \frac{1}{2}(1+\ve) u(1)+k_1(1)$.
Clearly $2\mid \frac{1}{2}(1+\ve) u(1)+k_1(1)$ iff $2\mid \frac{1}{2}(1+\ve) u(-1)+k_1(-1)$,
giving $m_1m_3/2^6\equiv m_2m_4/2^6$ mod 8, and  $m=m_1m_2m_3m_4/2^{12}\equiv 1$ mod 8.

Similarly, when  $\delta =\ve$ we have
\begin{align*}
\lambda_1 & =\frac{1}{2}(v(1)-u(1))+h_1(1)-k_1(1),\\
\lambda_2 & =\ve+v(1)+u(1)+2h_1(1)+2k_1(1).
\end{align*}
Since
$$ \lambda_1 = \ve \alpha_2 + \frac{1}{2}(1-\ve)v(1) -\frac{1}{2}(1-\ve) u(1) +h_1(1)-k_1(1), $$
we have $2\mid \frac{1}{2}(1-\ve)v(1) -\frac{1}{2}(1-\ve) u(1) +h_1(1)-k_1(1), $ and
$$ \lambda_2 = \ve \alpha_1 +(1-\ve)v(1)+(1-\ve)u(1)+2h_1(1)+2k_1(1) \equiv  \ve \alpha_1 \mod 4. $$
So $\lambda_2 \equiv  \ve \alpha_1 \mod 8, $ $4\mid \frac{1}{2}(1-\ve)v(1)+\frac{1}{2}(1-\ve)u(1)+h_1(1)+k_1(1)$  and
$$\lambda_1\lambda_2 \equiv  (\ve \alpha_2 -(1-\ve)u(1)-2k_1(1) )\ve \alpha_1 \equiv \alpha_1\alpha_2 - (1-\ve)u(1)-2k_1(1)  \mod 4, $$
giving $m_1m_3/2^6 \equiv 1$ mod 8, iff $2\mid \frac{1}{2}(1-\ve) u(1)+k_1(1)$.
Again $m\equiv 1$ mod 8.

\vskip0.1in
\noindent
{\bf (ii) Suppose that $u(1)+v(1)$ is odd.} In this case 
$$ \alpha_1=v(1)-u(1),\;\;\; \alpha_2=\frac{1}{2}(1+u(1)+v(1)). $$
When $\delta =-\ve$ we have 
\begin{align*}
\lambda_1= & \frac{1}{2}(\ve + v(1)-u(1)) +h_1(1)-k_1(1),\\
\lambda_2 = & v(1)+u(1) +2h_1(1)+2k_1(1).
\end{align*}
So
$$ \lambda_1 =\ve \alpha_2 +\frac{1}{2}(1-\ve)v(1) -\frac{1}{2}(1+\ve)u(1)  +h_1(1)-k_1(1), $$
and $2\mid \frac{1}{2}(1-\ve)v(1) -\frac{1}{2}(1+\ve) u(1) +h_1(1)-k_1(1)$, giving
$$ \lambda_2=\ve \alpha_1  + (1-\ve)v(1)+(1+\ve)u(1) +2h_1(1)+2k_1(1) \equiv \ve \alpha_1  \mod 4.$$
Hence  $\lambda_2\equiv \ve \alpha_1$ mod 8,  $4\mid \frac{1}{2}(1-\ve)v(1)+\frac{1}{2}(1+\ve)u(1) +h_1(1)+k_1(1)$ and
$$   \lambda_1\lambda_2 \equiv  (\ve \alpha_2 -(1+\ve)u(1)-2k_1(1) )\ve \alpha_1 \equiv \alpha_1\alpha_2 - (1+\ve)u(1)-2k_1(1)  \mod 4. $$
Thus $m_1m_3/2^6 \equiv 1$ mod 8 iff $2\mid \frac{1}{2}(1+\ve)u(1)+k_1(1).$ Again $m\equiv 1$ mod 8.

When $\delta =\ve$ we have 
\begin{align*}
\lambda_1= & \frac{1}{2}(\ve + v(1)+u(1)) +h_1(1)+k_1(1),\\
\lambda_2 = & v(1)-u(1) +2h_1(1)-2k_1(1).
\end{align*}
Hence
$$ \lambda_1=\ve \alpha_2 + \frac{1}{2}(1-\ve) v(1) +\frac{1}{2}(1-\ve)u(1)+h_1(1)+k_1(1), $$
and $2\mid  \frac{1}{2}(1-\ve) v(1) +\frac{1}{2}(1-\ve)u(1)+h_1(1)+k_1(1),$ giving
$$ \lambda_2= \ve \alpha_1 -(1-\ve) u(1)+(1-\ve)v(1)+2h_1(1)-2k_1(1) \equiv \ve \alpha_1\mod 4. $$
So  $\lambda_2\equiv \ve \alpha_1$ mod 8, $4\mid  \frac{1}{2}(1-\ve) v(1)-\frac{1}{2}(1-\ve)u(1)+h_1(1)-k_1(1)$, and
$$    \lambda_1\lambda_2 \equiv  (\ve \alpha_2 +(1-\ve)u(1)+2k_1(1) )\ve \alpha_1 \equiv \alpha_1\alpha_2 + (1-\ve)u(1)+2k_1(1)  \mod 4. $$
Hence  $m_1m_3/2^6 \equiv 1$ mod 8 iff $2\mid \frac{1}{2}(1-\ve)u(1)+k_1(1).$ Again $m\equiv 1$ mod 8.

\end{proof}

\end{document}